\font\smallit=cmti10
\renewcommand\section{\@startsection {section}{1}{\z@}
{-30pt \@plus -1ex \@minus -.2ex}
{2.3ex \@plus.2ex}
{\normalfont\normalsize\bfseries\boldmath}}
\renewcommand\subsection{\@startsection{subsection}{2}{\z@}
{-3.25ex\@plus -1ex \@minus -.2ex}
{1.5ex \@plus .2ex}
{\normalfont\normalsize\bfseries\boldmath}}
\renewcommand{\@seccntformat}[1]{\csname the#1\endcsname. }
\newtheorem{lemma}{Lemma}
\theoremstyle{definition}
\newcommand{\Z}{\mathbb{Z}}
\newtheorem{algorithm}{Algorithm}[section]
\begin{document}

\begin{center}
\uppercase{\bf The South Caicos factoring algorithm}
\vskip 20pt
{\bf Michael O. Rubinstein
\footnote{Support for work on this paper was provided by an
NSERC Discovery Grant}}\\
{\smallit Pure Mathematics, University of Waterloo, Waterloo, Ontario,  Canada}\\
\vskip 10pt
\end{center}

\centerline{\bf Abstract}

\noindent
Let $N=UV$, where $U,V$ are integers, with $1< U,V <N$, and $\gcd(U,V)=1$.
We describe a probabilistic algorithm for factoring $N$ using $O(\max(U,V)^{1/2+\epsilon})$
bit operations.

\pagestyle{myheadings} 
\thispagestyle{empty} 
\baselineskip=12.875pt 
\vskip 30pt



\section{Preliminaries}
\label{sec:caicos}

Let $N=UV$, where $U,V$ are integers, with $1< U,V <N$, and $\gcd(U,V)=1$.

Let $a$ be an integer, $1<a<N$. By the division algorithm, write
\begin{eqnarray}
    \label{eq:U V}
    U=u_1 a + u_0, \quad \text{with $0 < u_0 < a$} \notag \\
    V=v_1 a + v_0, \quad \text{with $0 < v_0 < a$}.
\end{eqnarray}
If, for a given $a$, we can determine $u_0,u_1,v_0,v_1$ then we have
found $U$ and $V$. We have assumed that $u_0$ and $v_0$ are non-zero. Otherwise, $a|N$ and we
easily extract a non-trivial factor of $N$.

Previously, the author developed a factoring algorithm (called 'Hide and Seek')
requiring $O(N^{1/3+\epsilon})$ bit operations which involves studying~\eqref{eq:U V} with large $a$,
of size $N^{1/3}$. 
Details are provided in~\cite{R}.

In this paper, we describe an alternative method
for finding $u_0, v_0, u_1$ and $v_1$, 
requiring $O(\max(U,V)^{1/2+\epsilon})$ bit operations.
Thus, in the case, for example, that  both $U$ and $V$ are $O(N^{1/2})$, the algorithm
has complexity $O(N^{1/4+\epsilon})$.

Let $a$ be prime. 
We also let $a> \max(U,V)^{1/2}$, so that $u_1, v_1 < a$.
Furthermore, $u_0$ and $v_0$ are invertible modulo $a$, because $a$ is prime and $0<u_0,v_0<a$.

Our starting point is the formula
\begin{equation}
    N = (u_1 a + u_0)( v_1 a +v_0) = u_1 v_1 a^2 + ( v_0 u_1  + u_0 v_1) a + u_0 v_0
    \label{eq:main}
\end{equation}
with $ 0 < u_0, v_0 < a$, and $u_1,v_1 < a$.
Thus, subtracting $u_0 v_0$, dividing by $a$, and reducing modulo $a$, we have:
\begin{equation}
    \label{eq:start}
    (N-u_0 v_0)/a = v_0 u_1 + u_0 v_1 \mod a.
\end{equation}
We will determine $u_0, v_0, u_1, v_1$ by considering this equation.

\section{Model case}
\label{sec:model case}

We first examine the rare situation that $v_0 = u_0 \mod a$, i.e., that $a|V-U$. After explaining
the method, we will relax this assumption.

Now, from~\eqref{eq:main}, $ u_0 v_0 = N \mod a$, hence, under the assumption $v_0 = u_0 \mod a$,
\begin{equation}
    u_0^2 = N \mod a.
    \label{eq:u_0 squared}
\end{equation}
Since $a$ is assumed prime, given $N$ and $a$, we can use the Tonelli-Shanks
algorithm~\cite{S} to determine the two possible solutions to the
above equation. 

The Tonelli-Shanks algorithm requires $O(\log{a}+r^2)$ multiplications modulo $a$,
where $r$ is the power of 2 dividing $a-1$. The average value of $r$, as one averages over primes $a$,
is equal to 2 (see the appendix). Thus, on average, over primes $a$, Tonelli-Shanks requires
$O(\log{a})$ multiplications modulo $a$ to determine  the two possible values of $u_0$.
And, because we are assuming $v_0 = u_0 \mod a$, $v_0$ is determined by $u_0$.

For each of the two possible solutions $0 < u_0 < a$ to~\eqref{eq:u_0 squared},
we multiply~\eqref{eq:start} by $u_0^{-1} \mod a$.
We get, assuming $v_0 = u_0 \mod a$,
\begin{equation}
    \label{eq:step 2}
    u_0^{-1} ((N-u_0 v_0)/a) = u_1 + v_1 \mod a.
\end{equation}
But $u_1+v_1 < 2a$ (because $u_1,v_1<a$),
i.e., either $0 \leq u_1+v_1 <a$, or $a \leq u_1 + v_1 < 2a$.
Therefore, given the left-hand side of~\eqref{eq:step 2}, i.e., given
$N,a,u_0,v_0$, there are at most
two possible values for $u_1 + v_1$, which we denote by $s$.
For each of the two possible values of $s$ (and given $N,a,u_0,v_0$),
we substitute $v_1 = s - u_1$ into~\eqref{eq:main},
and solve the resulting quadratic equation in $u_1$, yielding two possible values of $u_1$,
which then also determines $v_1=s-u_1$.
We then test whether the $u_0, v_0, u_1, v_1$ thus obtained
gives a correct integer factorization of $N$.

\vspace{-.1in}
\section{Generalizing the model case}
\label{sec:general}

The model case, $v_0 = u_0 \mod a$, occurs rarely, but similar cases can be considered.
For example, say
\begin{equation}
    \beta v_0 = \alpha u_0 \mod a.
    \label{eq:alpha beta}
\end{equation}
Assume further that
\begin{eqnarray}
    \label{eq:conditions}
    &&\text{$\alpha,\beta$ are invertible modulo $a$,}\notag \\
    &&\gcd(\alpha,\beta)=1, \notag\\
    &&1 \leq \alpha \leq \beta_\text{max}/2, \notag \\
    &&-\beta_\text{max} \leq \beta \leq \beta_\text{max}/2,
\end{eqnarray}
for some positive $\beta_\text{max}$.

Equation~\eqref{eq:alpha beta} can be equivalently written as
\begin{equation}
    a | \beta V - \alpha U.
   \label{eq:beta V - alpha U}
\end{equation}
Now, $u_0 v_0 =N \mod a$, hence, by~\eqref{eq:alpha beta},
\begin{equation}
    u_0^2 = \alpha^{-1} \beta N \mod a.
    \label{eq:u_0 squared general}
\end{equation}
Thus, given $N,\alpha,\beta$, and prime $a$, we can again use the Tonelli-Shanks algorithm to determine the
two possible values of $u_0 \mod a$.

Hence, multiplying~\eqref{eq:start} by $\beta u_0^{-1} \mod a$, we get
\begin{equation}
    \label{eq:conclusion general}
    \beta u_0^{-1} ((N-u_0 v_0)/a) =  \alpha u_1  + \beta  v_1 \mod a.
\end{equation}
But, because of our assumed bounds on $\alpha$ and $\beta$, we have 
\begin{equation}
    - \beta_\text{max} a < \alpha u_1 + \beta v_1  < \beta_\text{max} a.
\end{equation}
Hence, given the left-hand side of~\eqref{eq:conclusion general},
there are at most $2 \beta_{\max}$ possibilities for 
\begin{equation}
\label{eq:s}
    s=\alpha u_1 + \beta v_1,
\end{equation}
i.e., one per interval of length $a$.

For each of the possible values of $s$ (and given $N,a,u_0,v_0,\alpha,\beta$),
we substitute $v_1 = (s - \alpha u_1)/\beta$ into~\eqref{eq:main},
and solve the resulting quadratic equation in $u_1$, yielding two possible values of $u_1$,
from which we also determine $v_1 = (s - \alpha u_1)/\beta$.
We then test whether the $u_0, v_0, u_1, v_1$ thus obtained
gives a correct integer factorization of $N=(u_1 a + u_0)(v_1 a + v_0)$.

Note that if $u_0$ leads to a positive integer factorization of $N=UV$, then the other solution $-u_0 \mod a$
to~\eqref{eq:u_0 squared general} produces the factorization $N=(-U)(-V)$.

\section{The South Caicos Algorithm}
\label{sec:algorithm}

We are now ready to describe our South Caicos factoring algorithm.

Initially, assume that $\max(U,V) < (2N)^{1/2}$.
In Section~\ref{sec:remove condition}, we will remove this assumption.

This condition holds, for example, if $U < V < 2U$ , since then $V^2 < 2UV = 2N$.
But because the method of the previous
section does not distinguish $U<V$, we prefer to state the condition as we have.

The idea is to loop through a small
number of values of $\alpha$ and $\beta$, as determined by $\beta_\text{max}=2$, say,
and primes, $(2N)^{1/4}< a < 2 (2N)^{1/4}$,
and apply the method of Section~\ref{sec:general}.

If, for given $(\alpha,\beta)$, we encounter
a prime $(2N)^{1/4} < a < 2 (2N)^{1/4}$ such that $a|\beta V - \alpha U$, then,
for that choice of $\alpha,\beta, a$, the method of Section~\ref{sec:general}
quickly uncovers $u_0,v_0,u_1,v_1$, and hence $U$ and $V$.

However, if, for our given set of $(\alpha,\beta)$'s, no such $(2N)^{1/4} < a < 2 (2N)^{1/4}$ is encountered,
then we can repeat the process with the same set of primes $a$, but with $\beta_\text{max}$ replaced, say,
with $\beta_\text{max}+2$, taking care to exclude $(\alpha,\beta)$'s already tested.

Heuristically, as $\beta_\text{max}$ grows,
we quickly expect to find $(\alpha,\beta)$, and a prime
$(2N)^{1/4} < a < 2 (2N)^{1/4}$, such that~\eqref{eq:beta V - alpha U} holds.
A complexity analysis follows after the pseudo code below.

\begin{algorithm}[South Caicos]
Let $N=UV$, with $U,V>1$ positive integers to be determined satisfying $\gcd(U,V)=1$, satisfying $\max(U,V) < (2N)^{1/2}$.
\begin{itemize}
    \item[1] Let $\beta_\text{max}=2$, and let $S(\text{old})$ be the empty set.
    \item[2] Let
        \begin{eqnarray*}
            S(\beta_\text{max}) &=&\{(\alpha,\beta) \in \Z^2:
            \gcd(\alpha,\beta)=1,
            \alpha \in [1 , \beta_\text{max}/2],\\
            &&\beta \in [-\beta_\text{max}, \beta_\text{max}/2], \beta \neq 0
        \}.
        \end{eqnarray*}
    \item[3] Let $a$ to be the first prime  $> (2N)^{1/4}$.
    \item[4] Use the Euclidean algorithm to compute $d=\gcd(N,a)$.
        If $d>1$ then we have determined a non-trivial factor of $N$ and quit.
    \item[5] For $(\alpha,\beta) \in S(\beta_\text{max})-S(\text{old})$:\\
    \phantom{}\hspace{3ex}
    Carry out the procedure described in Section~\ref{sec:general} for given $N,a, \alpha,\beta$.\\
    \phantom{}\hspace{3ex}
    If this results in a non-trivial integer factorization of $N$, then quit.
    \item[6]
    Replace $a$ by the next prime, and, if $a< 2 (2N)^{1/4}$, repeat from Step 4.
    \item[7] If $\beta_\text{max}+2 < (2N)^{1/4}$, replace $S(\text{old})$ by $S(\beta_\text{max})$, $\beta_\text{max}$ by $\beta_\text{max}+2$, and repeat from Step 2, but, henceforth, skipping over Step 4. Otherwise exit.
\end{itemize}
\end{algorithm}
Note that we do not invoke the invertibility condition of~\eqref{eq:conditions} in our definition
of $S(\beta_\text{max})$. Instead, we assume that $\beta_\text{max} < (2N)^{1/4}$, and also $\beta \neq 0$.
Because $a>(2N)^{1/4}$ is prime, this guarantees $\alpha,\beta$ are invertible $\mod a$.
We expect the algorithm to produce a factorization of $N$ well before the exit condition is reached.
See the discussion below.

Analysis:
The success and efficiency of the method hinges on encountering a prime $(2N)^{1/4} < a < 2 (2N)^{1/4}$,
and relatively small integers $\alpha,\beta$, such that $a|\beta V - \alpha U$.
Heuristically, for $U,V$ much larger than, and relatively prime to $a$, and $\gcd(U,V)=1$, we expect
$\beta V -\alpha U$ to be divisible by $a$, on average over $S(\beta_\text{max})$, $1/a$ of the time.

More precisely, letting $X=(2N)^{1/4}$, we expect, as $X\to \infty$ and $|S(\beta_\text{max})|/\log{X} \to \infty$ (but also with $\beta_\text{max} < X)$,
the number of triples $\alpha,\beta,a$, with
$a|\beta V -\alpha U$, $X < a < 2X$, and $(\alpha,\beta) \in S(\beta_\text{max})$,  to satisfy
\begin{equation}
    \label{eq:13}
    \sum_{\substack{X < a < 2 X\\ a \text{ prime}}} \sum_{\substack{(\alpha,\beta) \in S(\beta_\text{max})\\ a| \beta V-\alpha U}} 1
    \sim  |S(\beta_\text{max})|
    \sum_{\substack{X < a < 2 X\\ a \text{ prime}}}  1/a
    \sim |S(\beta_\text{max})| \log(2)/\log(X).
\end{equation}
The last step follows from the Prime Number Theorem and a summation by parts, or else using
the elementary estimate $\sum_{\substack{a < Y\\ a \text{ prime}}}  1/a \sim \log\log(Y) + b + O(1/\log(Y))$, where $b$ is a
constant, and noting that $\log\log(2X) - \log\log(X) = \log ((\log(2)+\log(X))/\log(X)) \sim \log(2)/\log(X)$.

However, from the definition of $S(\beta_\text{max})$,
\begin{equation}
    \label{eq:14}
    |S(\beta_\text{max})| \sim \frac{6}{\pi^2} \frac{3}{4} \beta_\text{max}^2,
\end{equation}
with the factor $6/\pi^2$ to account for the condition $\gcd(\alpha,\beta)=1$.
Thus, by~\eqref{eq:13} and~\eqref{eq:14}, as $\beta_\text{max}/\log(N)^{1/2}$ grows,
we expect to encounter at least one $(\alpha,\beta)
\in S(\beta_\text{max})$, and a prime $X < a < 2X$, with $X=(2N)^{1/4}$, such that $a|\beta V -\alpha U$,
and hence such that the method
of Section~\ref{sec:general} with succeed in finding non-trivial factors $U,V$ of $N$. We also note that this should
occur long before we trigger the exit condition of Step 7, since $\log(N)^{1/2}$ grows much slower than
$(2N)^{1/4}$.

The bulk of the work, per $(\alpha,\beta,a)$, involves one application of the Tonelli-Shanks algorithm
in Equation~\eqref{eq:u_0 squared general}, followed by the extraction of the roots of $2 \beta_\text{max}$
quadratic equations, one per each value of $s$ from~\eqref{eq:s}.

For each candidate $X< a < 2X$, primality testing of $a$ can be done in polynomial time.
Alternatively, one can sieve for all primes in the interval using the sieve of Eratosthenes,
at a cost of $O(X^{1/2}/\log{X})$, i.e., $O(N^{1/8}/\log{N})$ bits of storage, needed to keep track
of multiples of the primes $< (2X)^{1/2}$ as we carry out the sieve in short intervals. A table of
primes $< (2X)^{1/2}$ needed to carry out the sieve can also be tabulated using the sieve of
Eratosthenes.

Overall, we expect this algorithm to successfully factor $N$ in $O(N^{1/4+\epsilon})$ bit operations.
With this stated efficiency, the method is probabilistic,
since it relies on finding a prime $X<a<2X$, and small $\alpha, \beta$,
i.e., of order $N^\epsilon$, such that $a|\beta V -\alpha U$.

\section{Example}

For example, if $N=23713634802068266491347$,
the algorithm first uncovers the triple $a=804901$, $\alpha = 1$, $\beta=3$, with
$u_0 = 523125$, $v_0 = 174375$, being a solution to $\beta v_0 = \alpha u_0 \mod a$, and $u_0 v_0 = N \mod a$,
found by applying Tonelli-Shanks to~\eqref{eq:u_0 squared general}.
Then, following the method in Section~\ref{sec:general},
we obtain $u_1 = 235108$, $v_1 = 155684$ (with the value of $s$ that succeeds in~\eqref{eq:s}
being $s= 702160$) , giving a correct factorization of $N = UV$, with
$U=u_1 a + u_0= 189239187433$, $V=v_1 a + v_0=125310381659$.

In table~\ref{table:example} we
list additional triples $a$, $\alpha$, $\beta$, with $\beta_\text{max}=16$, such that
$a| \beta V - \alpha U$, and the corresponding
values of $u_0$, $v_0$, $s$, $u_1$, $v_1$, $U$ and $V$, produced by our method.

\begin{table}[h]
\centerline{
\begin{tabular}{|c|c|c|c|c|c|c|c|c|c|}
\hline
$a$ & $\alpha$ & $\beta$ & $u_0$ & $v_0$ & $s$ & $u_1$ & $v_1$ & $U$ & $V$ \\ \hline
804901 & 1 & 3 & 523125 & 174375 & 702160 & 235108 & 155684 & 189239187433 & 125310381659\\
804901 & 3 & 1 & 174375 & 523125 & 702160 & 155684 & 235108 & 125310381659 & 189239187433 \\
546671 & 1 & -7 & 268355 & 274047 & -2193938 & 229224 & 346166 & 125310381659 & 189239187433 \\
601291 & 4 & -5 & 282622 & 134677 & 216874 & 314721 & 208402 & 189239187433 & 125310381659 \\
837043 & 3 & -7 & 505993 & 22301 & -369702 & 226080 & 149706 & 189239187433 & 125310381659 \\
601291 & 5 & -4 & 134677 & 282622 & -216874 & 208402 & 314721 & 125310381659 & 189239187433 \\
685099 & 6 & -7 & 456554 & 293767 & 376970 & 276221 & 182908 & 189239187433 & 125310381659 \\
546671 & 7 & -1 & 274047 & 268355 & 2193938 & 346166 & 229224 & 189239187433 & 125310381659 \\
644153 & 1 & 7 & 77804 & 563246 & 2250988 & 194535 & 293779 & 125310381659 & 189239187433 \\
644153 & 7 & 1 & 563246 & 77804 & 2250988 & 293779 & 194535 & 189239187433 & 125310381659 \\
685099 & 7 & -6 & 293767 & 456554 & -376970 & 182908 & 276221 & 125310381659 & 189239187433 \\
837043 & 7 & -3 & 22301 & 505993 & 369702 & 149706 & 226080 & 125310381659 & 189239187433 \\
743161 & 7 & -16 & 60161 & 670393 & -2893914 & 168618 & 254640 & 125310381659 & 189239187433 \\
\hline
\end{tabular}
}
\caption
{We list, for $N=23713634802068266491347$ the values of prime $a$, $1\leq \alpha \leq 8$, 
$-16 \leq \beta  \leq 8$, such that the method of Section~\ref{sec:general} produces
values of $u_0$, $v_0$, $u_1$, $v_1$ that give a correct positive integer factorization
of $N$. We also list those parameters, along with the corresponding value of $s$ in~\eqref{eq:s},
and the values of $U$ and $V$.
}
\label{table:example}
\end{table}

\section{Removing the assumption $\max(U,V) < (2N)^{1/2}$}
\label{sec:remove condition}

The assumption that $\max(U,V) < (2N)^{1/2}$ was made so that, with $a>(2N)^{1/4}$,
one has, for given $a$, that $u_1,v_1 <a$. This is important in Equation~\eqref{eq:s} so that
we only need to check $2 \beta_{\max}$ possibilities for $s$.

However, we need not assume this bound on $\max(U,V)$.

Let $X=(2N)^{1/4}$.
We run the algorithm of Section~\ref{sec:algorithm}, but, at the $j$-th iteration of Step 3,
we change it to read 'let $a$ be the first prime $> 2^{j-1} X$, and in Step 6, replace '$2(2N)^{1/4}$'
with '$2^j X$'. We also use, for given $N$, the value $\beta_\text{max} = j \log{N}$, and
eliminate $S(\text{old})$.

Thus, at the $j$-th iteration, we look at sets of ever larger primes $2^{j-1} X < a < 2^j X$. For $j$ sufficiently large,
we have $a> \max(U,V)^{1/2}$, and thus $u_1, v_1 < a$, as needed for
the method of Section~\ref{sec:general} to succeed.

The large value of $\beta_\text{max}$
relative to $\log(N)^{1/2}$, and the analysis of Section~\ref{sec:general}, suggests that,
with probability tending to 1, as $N \to \infty$, that we will thus succeed in factoring $N$ using
$O(\max(U,V)^{1/2+\epsilon})$ bit operations.

\begin{algorithm}[South Caicos B]
Let $N=UV$, with $U,V>1$ positive integers to be determined satisfying $\gcd(U,V)=1$.
\begin{itemize}
    \item[1] Let $\beta_\text{max}=\log{N}$, $j=1$, and $X=(2N)^{1/4}$.
    \item[2] Let
        \begin{eqnarray*}
            S(\beta_\text{max}) &=&\{(\alpha,\beta) \in \Z^2:
            \gcd(\alpha,\beta)=1,
            \alpha \in [1 , \beta_\text{max}/2],\\
            &&\beta \in [-\beta_\text{max}, \beta_\text{max}/2], \beta \neq 0
        \}.
        \end{eqnarray*}
    \item[3] Let $a$ to be the first prime  $> 2^{j-1} X$.
    \item[4] Use the Euclidean algorithm to compute $d=\gcd(N,a)$.
        If $d>1$ then we have determined a non-trivial factor of $N$ and quit.
    \item[5] For $(\alpha,\beta) \in S(\beta_\text{max})$:\\
    \phantom{}\hspace{3ex}
    Carry out the procedure described in Section~\ref{sec:general} for given $N,a, \alpha,\beta$.\\
    \phantom{}\hspace{3ex}
    If this results in a non-trivial integer factorization of $N$, then quit.
    \item[6]
    Replace $a$ by the next prime, and, if $a< 2^j X$, repeat from Step 4.
    \item[7] Replace $j$ by $j+1$, $\beta_\text{max}$ by $j \log{N}$, and repeat from Step 2.
\end{itemize}
\end{algorithm}

\section{Appendix}

We justify the assertion made in Section~\ref{sec:model case} regarding the average value of $r$ that
appears in the Tonelli-Shanks algorithm.

\begin{lemma}
Let $a$ be prime, and $r$ the power of 2 dividing $a-1$.
Then, the average value of $r$ tends to 2, when averaged over primes $A < a \leq 2A$, as $A \to \infty$.
\end{lemma}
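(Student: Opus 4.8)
The plan is to reduce the average of $r$ to a question about counting primes in arithmetic progressions. The starting observation is the layer-cake identity
\begin{equation}
    r(a) = \sum_{k \geq 1} \mathbf{1}[\,2^k \mid a-1\,],
\end{equation}
valid since $r(a)$ is the largest $k$ with $2^k \mid a-1$. Summing over the primes in $(A,2A]$ and interchanging the two sums (legitimate since each is finite — indeed $2^k \mid a-1$ forces $k \leq \log_2(2A)$), I obtain
\begin{equation}
    \sum_{\substack{A < a \leq 2A \\ a \text{ prime}}} r(a)
    = \sum_{1 \leq k \leq \log_2(2A)} \bigl( \pi(2A; 2^k, 1) - \pi(A; 2^k, 1) \bigr),
\end{equation}
where $\pi(x; q, b)$ counts primes $p \leq x$ with $p \equiv b \pmod q$. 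After dividing by $\pi(2A)-\pi(A)$, the task is to show the right-hand side is asymptotic to $2\,(\pi(2A)-\pi(A))$.

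Next I would split the $k$-sum at $k_0 = \lfloor B \log_2\log A \rfloor$, for a fixed constant $B>1$. For the small moduli $k \leq k_0$, i.e.\ $2^k \leq (\log A)^B$, the Siegel--Walfisz theorem applies uniformly, giving, for $x \asymp A$ and some absolute $c>0$,
\begin{equation}
    \pi(x; 2^k, 1) = \frac{\operatorname{Li}(x)}{\phi(2^k)} + O\!\left( x \exp(-c\sqrt{\log x}) \right).
\end{equation}
Writing $M = \operatorname{Li}(2A) - \operatorname{Li}(A) \sim A/\log A$ and summing over $k \leq k_0$ produces the main term $M \sum_{k \leq k_0} 1/\phi(2^k)$, with accumulated error $O\bigl(k_0\, A\exp(-c\sqrt{\log A})\bigr) = o(A/\log A)$, since $k_0 = O(\log\log A)$. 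For the large moduli $k > k_0$ I would abandon primality and use the trivial count: the integers in $(A,2A]$ lying in the residue class $1 \bmod 2^k$ number at most $A/2^k + 1$; summing the geometric series and accounting for the $O(\log A)$ admissible values of $k$ gives a total $O\bigl(A/(\log A)^B + \log A\bigr)$, which is $o(A/\log A)$ once $B>1$.

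It then remains to evaluate the series exactly, using $\phi(2) = 1$ and $\phi(2^k) = 2^{k-1}$ for $k\geq 2$:
\begin{equation}
    \sum_{k \geq 1} \frac{1}{\phi(2^k)} = 1 + \sum_{k \geq 2} \frac{1}{2^{k-1}} = 2,
\end{equation}
the tail $\sum_{k > k_0} 1/\phi(2^k)$ being $O((\log A)^{-B})$ and hence negligible. Combining the pieces gives $\sum_{A<a\le 2A} r(a) = 2M + o(A/\log A)$, and dividing by $\pi(2A) - \pi(A) = M(1+o(1)) \sim A/\log A$ (the Prime Number Theorem) yields average $\to 2$.

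I expect the main obstacle to be the uniformity in $k$: the modulus $2^k$ grows with the range, so no single application of prime-counting-in-progressions covers the entire sum, and one must balance the genuine prime count against a growing error term. The split above is precisely what resolves this — Siegel--Walfisz is quantitatively strong but only for moduli bounded by a power of $\log A$, while the remaining moduli are too large to host a prime at this scale yet few enough that the crude divisibility bound suffices. (For the special modulus $q=2^k$ one could avoid Siegel's ineffective constant altogether and use a classical explicit zero-free region, but invoking Siegel--Walfisz keeps the argument cleanest.)
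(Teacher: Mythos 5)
Your proof is correct, but it organizes the computation differently from the paper. You decompose \emph{vertically}, via the layer-cake identity $r(a)=\sum_{k\ge 1}\mathbf{1}[2^k\mid a-1]$, which turns the average into $\sum_k \bigl(\pi(2A;2^k,1)-\pi(A;2^k,1)\bigr)$ over a whole range of moduli; you then apply Siegel--Walfisz for all $2^k\le(\log A)^B$ and dispose of the larger $k$ with the trivial count $A/2^k+1$, arriving at $\sum_{k\ge1}1/\phi(2^k)=2$. The paper instead decomposes \emph{horizontally}: it fixes a single modulus $2^k$ with $\log(A)^2<2^k\le 2\log(A)^2$, observes that the residue class $m\bmod 2^k$ of a prime determines $r$ exactly whenever $m\not\equiv 1$ (with $2^{k-r-1}$ classes giving each value $r\le k-1$), invokes Siegel--Walfisz once for that one modulus, and evaluates $\frac{1}{2^{k-1}}\sum_{r=1}^{k-1}r\,2^{k-r-1}\to 2$; the exceptional class $m\equiv 1$, where $r$ can be as large as $\log_2(2A)$, contributes the paper's $O((\log A+k)/2^k)$ error, playing the same role as your large-modulus tail. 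The two are essentially dual ways of summing the same double array: yours trades the finite combinatorial identity $\sum_{r=1}^{k-1}r\,2^{k-r-1}=2^k-k-1$ for the cleaner geometric series $\sum_k 2^{-(k-1)}=2$, at the cost of needing the prime-counting estimate uniformly over a range of moduli rather than for one; since Siegel--Walfisz delivers exactly that uniformity (and, as you note, for moduli $2^k$ one could even make the constants effective), nothing is lost, and both arguments land on the same error budget $o(A/\log A)$.
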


\begin{proof}
Let $k$ be a positive integer.
If $a=m \mod 2^k$, with $m$ odd and $1 \leq m < 2^k$, then the value
of $r$, the power of 2 dividing $a-1$, is equal to
\begin{eqnarray}
   &\text{1, if $m-1=2,6,10,14,\ldots$} \notag \\
   &\text{2, if $m-1=4,12,20,28,\ldots$} \notag \\
   &\text{3 if $m-1=8,24,40,56,\ldots$} \notag \\
   &\text{etc.} \notag
\end{eqnarray}
More precisely, if we write $m$ as a $k$ bit binary number (possibly with some
leading zeros), then $r=1$ if $m$ ends in $11$, $r=2$ if $m$ ends in $101$, $r=3$ if $m$ ends in $1001$, etc.
In particular, $2^{k-2}$ of these $m$ have $r=1$, $2^{k-3}$ have $r=2$, $2^{k-4}$ have $r=3$, $\ldots$, one has
$r=k-1$ (namely $m=2^{k-1}+1$).
The residue class $m=1$ requires more careful consideration.
If $m=1$, then the value of $r$ is not precisely determined, but rather satisfies, for $a<2A$,
\begin{equation}
    \label{eq: r}
    k \leq r \leq \log(2A)/\log(2).
\end{equation}

Now, the primes are
equi-distributed amongst the odd residue classes mod $2^k$. However, we require slightly more than just
the main term of the Prime Number Theorem in arithmetic progressions. Specifically, let
$c>0$, and $q$ a positive integer with $q \leq \log(x)^c$.
The Siegel-Walfisz Theorem implies that, if $\gcd(m,q)=1$ then, $\pi(x;q,m)$,
the number of primes less than or equal to $x$ and congruent to $m \mod q$, satisfies
\begin{equation}
   \label{eq:siegel-walfisz}
   \pi(x;q,m) = \frac{1}{\phi(q)} \frac{x}{\log{x}} (1+o(1)),
\end{equation}
as $x\to \infty$, with the implied constant dependent on $c$, and ineffective. If we assume the GRH, then this
holds with the implied constant effectively computable (and also a much stronger remainder term).
Thus, for $k$ satisfying, say,
\begin{equation}
    \label{eq:k size}
    \log(A)^2 < 2^k \leq 2\log(A)^2,
\end{equation}
we have, unconditionally,
\begin{equation}
   \label{eq:pnt}
   \pi(2A,2^k,m) - \pi(A,2^k,m) = \frac{1}{2^{k-1}} \frac{A}{\log{A}}(1+o(1)),
\end{equation}
as $A\to \infty$.

Counting the contribution from each residue class $m \mod 2^k$, and taking into account~\eqref{eq: r} and~\eqref{eq:pnt},
the average value of $r$, over primes $A < a \leq 2A$, is equal to:
\begin{equation}
    \label{eq:average} 
    \frac{1}{\pi(2A)-\pi(A)} \left(\sum_{r=1}^{k-1} r 2^{k-r-1} + O(\log{A})\right) \ \frac{1}{2^{k-1}} \frac{A}{\log{A}}(1+o(1)).
\end{equation}
But the sum in parentheses is equal to $2^k-k-1$, as can be verified inductively. Furthermore,
$\pi(2A)-\pi(A) \sim A/\log{A}$. Thus, the above equals
\begin{equation}
    \label{eq:average b} 
    \left(2 + O( (\log{A}+k)/2^k )\right)(1+o(1)).
\end{equation}
But, by~\eqref{eq:k size}, $(\log(A)+k)/2^k \to 0$ as $A \to \infty$. Hence, the average value of $r$
tends to 2 as $A \to \infty$.
\end{proof}

\noindent We note that condition~\eqref{eq:k size} is used in two places. We need $2^k$ to grow faster than $\log(A)$
so as to get the limiting value of 2 in Equation~\eqref{eq:average b}. We also invoke the Siegel-Walfisz
theorem in~\eqref{eq:siegel-walfisz} which gives a uniform estimate for the Prime Number Theorem in arithmetic
progressions, so long as the modulus $2^k$ grows slower than a power of $\log(A)$, hence the assumption
that $2^k < 2 \log(A)^2$.

\noindent {\bf Acknowledgement.}
The above algorithm was developed by the author in South Caicos while on vacation
with his lovely girlfriend Lisa, in between snorkeling, drinking, and getting chased by rabid dogs on the beach.


\begin{thebibliography}{10}\footnotesize

\bibitem{R} M.O. Rubinstein, The distribution of solutions to $XY=N \mod a$ with an application to factoring
integers, {\it Integers} {\bf 13} (2013), A12, 1--13.

\bibitem{S} D. Shanks, Five number theoretic algorithms,
{\it Proceedings of the Second Manitoba Conference on Numerical Mathematics}, Utilitas
Mathematica, Winnipeg, 1973, 51-–70.

\end{thebibliography}
\end{document}